\documentclass{amsart}
\usepackage{hyperref}
\usepackage{setspace}
\usepackage{amsmath,amssymb,amsthm,mathabx}
\usepackage[margin=3cm]{geometry}
\usepackage{fancyhdr}
\usepackage{enumitem}
\usepackage{cleveref}
\usepackage{tikz}

\usepackage{color}

\newcommand{\obp}{\otimes^{\gamma}}
\newcommand{\N}{\mathbb N}

\newcommand{\ot}{\otimes}
\newcommand{\mU}{\mathcal U}
\newcommand{\mT}{\mathcal T}
\newcommand{\mB}{\mathcal B}

\newcommand{\C}{\mathbb C}
\newcommand{\mA}{\mathcal{A}}

\theoremstyle{definition}

\allowdisplaybreaks

\theoremstyle{cupthm}
\newtheorem{theorem}{Theorem}[section]
\newtheorem{prop}[theorem]{Proposition}
\newtheorem{corollary}[theorem]{Corollary}
\newtheorem{lemma}[theorem]{Lemma}
\newtheorem{remark}[theorem]{Remark}
\newtheorem{conjecture}[theorem]{Conjecture}
\theoremstyle{cupdefn}

\numberwithin{equation}{section}

\title[Second dual of generalized group algebras]{Annihilators in the bidual of the generalized group algebra of a discrete group.}

\author{Lav Kumar Singh}
\address{School of Physical Sciences, Jawaharlal Nehru University, New
  Delhi }

\email{lavksingh@hotmail.com} \keywords{ Banach
	algebras, Arens regularity, projective
	tensor product, Group algebra}

\subjclass[2010]{47B10, 46B28, 46M05}

\thanks{This research work was carried out with the financial support
	from the Council of Scientific and Industrial Research (Government
	of India) through a Senior Research Fellowship with
	No. \bf 09/263(1133)/2017-EMR-I}
\begin{document}
\maketitle
\begin{abstract}
In this short note, the second dual of generalized group algebra $(\ell^1(G,\mA),\ast)$ equipped with both Arens product is investigated, where $G$ is any discrete group and $\mA$ is a Banach algebra containing a complemented algebraic copy of $(\ell^1(\mathbb N),\bullet)$. We give an explicit family of annihilators(w.r.t both the Arens product) in the algebra $\ell^1(G,\mA)^{**}$, arising from non-principal ultrafilters on $\mathbb N$ and which are not in the toplogical center. As a consequence, we also deduce the fact that $\ell^1(G,\mA)$ is not Strongly Arens irregular.
\end{abstract}

\section{Introduction}
 For any Banach algebra $\mA$, Richard Arens (in \cite{Arens}) defined
 two products $\Box$ and $\Diamond$ on its bidual  space $\mA^{**}$ such that each product
 makes $\mA^{**}$ into a Banach algebra and the canonical isometric
 inclusion $J:\mA\to \mA^{**}$ becomes a homomorphism with respect to both
 the products. A Banach algebra $\mA$ is said to be \textit{Arens
   regular} if the two products $\Box$ and $\diamond$ agree on
 $\mA^{**}$, i.e. if $f\Box g=f\diamond g$ for all $f,g\in \mA^{**}$. Even
 when a Banach algebra is not Arens regular, the size of its
 topological center carry a great deal of information.  Dales and Lau
 introduced the notion of strong Arens irregularity to distinguish
 between non-Arens regular Banach algebras. 
Briefly speaking, a Banach algebra $\mA$ is said to be {\em left}
(resp., {\em right}) {\em strongly Arens irregular} if its so called
{\em left topological center} $Z_t^{(l)} (\mA^{**})$ (resp., {\em
  right topological center} $Z_t^{(r)} (\mA^{**})$) equals $\mA$ (see \ref{def1} for definitions). And 
$\mA$ is said to be {\em strongly Arens irregular} (in short, S.A.I.)
if it is both left and right strongly Arens irregular.  Interestingly,
on the other extreme, $\mA$ is Arens regular if and
only if $Z_t^{(l)} (\mA^{**}) = \mA^{**} = Z_t^{(r)}
(\mA^{**})$. Reflexive spaces turn out to be at the junction of Arens
regularity and strong Arens irregularity because
$\mA=\mA^{**}=Z^{(l)}(\mA^{**})=Z^{(t)}(\mA^{**})$.\smallskip

 After the pioneering paper \cite{Arens} by Richard Arens, people
 started investigating the Arens regularity of various Banach algebras
 that appear in Abstract Harmonic Analysis. The group algebra $L^1(G)$
 has been a center of attention for quite some time. Arens himself
 proved that the semi-group algebra $\ell^1(\mathbb N)$ (with respect
 to convolution) is not Arens regular. N.J Young in \cite{Young} showed that the group algebra $L^1(G)$ is never Arens regular for any infinite locally compact group $G$.  Graham, in \cite{Graham},
 gave an easy proof for the irregularity of $L^1(G)$ for the case when $G$ is infinite discrete or amenable locally compact group.
  Losert and Lau in \cite{Lau-losert}
 proved that $L^1(G)$ is strongly Arens irregular for all infinite
 locally compact groups.\smallskip

In a more general setup, one can consider the space $L^1(G,\mA)$ of all $\mA$ valued Bochner integrable functions, where $G$ is any locally compact group and $\mA$ is a Banach algebra. The space $L^1(G,\mA)$ becomes a Banach algebra with respect to convolution multiplication and $||\cdot||_1$ norm (see \ref{generalised group algebras} for details).  It is then quite natural to analyze the strong Arens irregularity of
 generalized group algebras $L^1(G,\mA)$ for infinite locally compact
 groups $G$ and Banach algebras $\mathcal A$. This short article is just a
 report on some progress made in this direction. A lot still remains
 to be explored. Since $L^1(G)$ is not Arens regular, and because
 $L^1(G, \mA) \cong L^1(G) \obp \mA$, we know from \cite{Ulger} that
 $L^1(G,\mA)$ is also not Arens regular. Question remains under what
 conditions on $\mA$, the Banach algebra $L^1(G,\mA)$ is strongly
 Arens irregular. Needless to mention, if $\mA=\mathbb C^n$ with
 point-wise multiplication and $G$ is infinite, then
 $L^1(G, \mA)$ is strongly Arens irregular (due to
 \cite{Lau-losert}). Further, since there is an isometric algebra
 isomorphism from $L^1(G)\obp L^1(H)$ onto $L^1(G\times H)$
 (w.r.t. convolution on both the legs) - see \cite{Kaniuth}, it follows
 again from \cite{Lau-losert} that $L^1(G,\mA)$ is strongly Arens
 irregular for $\mA=L^1(H)$, where $H$ is an infinite group. However,
 $L^1(G,\ell^1):=\ell^1(L^1(G))$ is not strongly Arens  irregular when $\ell^1$ is equipped with pointwise product. We shall prove this in the next section.
 This is interestingly in contrast to the above mentioned fact that
 the (infinite) group algebra $L^1(G)$ is always S.A.I. - see
 \cite{Lau-losert}. 
 We study the bidual of $L^1(G,\ell^1)$ for discrete groups  $G$ and $\ell^1$ equipped with pointwise product, and show that it is rich in annihilators. We give a family of functionals, generated by non-principal ultrafilters, in the bidual of $L^1(G,\ell^1)$, which are annihilators of the bidual with respect to both the Arens product and are not an element of $L^1(G,\ell^1)$. As a consequence, we also deduce the failure of SAI in generalized group algebras explicitly (which is well known as mentioned in previous paragraph). Getting hold of functionals in bidual of a Banach algebra which are not in the algebra itself is usually a difficult task. The ultrafilter technique demonstrated in third section seems to be a powerful tool to study biduals of some Banach algebras.

\section{Preliminaries}

\subsection{Arens regularity and strong Arens irregularity}\label{def1}
We quickly recall the definitions of the two products $\Box$ and
$\Diamond$.  Let $\mA$ be a Banach algebra. For $a \in \mA$,
$\omega\in \mA^*$, $f\in \mA^{**}$, consider the functionals
$\omega_a, {}_{a}\omega\in \mA^*$, $\omega_f,{}_f\omega\in \mA^{**}$
given by
\[
w_a=(L_a)^*(\omega),
{}_a\omega=(R_a)^*(\omega); \omega_f(a)=f({}_a\omega)\text{ and }
{}_f\omega(a)=f(\omega_a).
\]
Then, for $f,g\in \mA^{**}$ the
operations $\Box$ and $\Diamond$ are given by
\[
(f\Box g)
(\omega)= f({}_g\omega)\text{ and } (f\Diamond
g)(\omega)=g(\omega_f)
\]
for all $\omega \in \mA^*$. And, $\mA$ is said to be {\em Arens regular} if the
products $\Box$ and $ \Diamond$ agree.

Further, the left and the right topological centers of $\mA$ are defined,
respectively, as
\[
\begin{array}{rcl}
Z_t^{(l)} (\mA^{**}) & = & \{\varphi \in \mA^{**}: \varphi\Box
\psi=\varphi\Diamond\psi \text{ for all } \psi\in \mA^{**} \}; \text{ and }
\\ Z_t^{(r)} (\mA^{**}) & = & \{\varphi \in \mA^{**}:
\psi\Box\varphi=\psi\Diamond \varphi \text{ for all } \psi\in \mA^{**} \}.
\end{array}
\]
Identifying $\mA$ with $J(\mA)$, where $J : \mA \to \mA^{**}$ is the
canonical isometry, it is known that $\mA \subseteq Z_t^{(l)}
(\mA^{**}) \cap Z_t^{(r)} (\mA^{**})$.  It is easily seen that $\mA$ is Arens
regular if and only if $Z_t^{(l)} (\mA^{**}) = \mA^{**} = Z_t^{(r)}
(\mA^{**})$.   A Banach algebra $\mA$ is said
to be {\em left} (resp., {\em right}) {\em strongly Arens irregular}
if $Z_t^{(l)} (\mA^{**}) = \mA$ (resp., $Z_t^{(r)} (\mA^{**}) =
\mA$). And, $\mA$ is said to be {\em strongly Arens irregular} if it
is both left and right strongly Arens irregular.
\subsection{Projective tensor product}
Let $X$ and $Y$ be Banach spaces. Then,  there
are various ways of imposing a normed space  structure on their algebraic tensor product $X
\otimes Y$.   For instance, for each $u \in X \otimes Y$, its
projective norm is given by
\[
\Vert u \Vert_\gamma =\inf\left\{\sum_{i=1}^{n}\Vert x_i\Vert
\Vert y_i\Vert~:~u=\sum_{i=1}^{n}x_i\otimes y_i\right\}.
\]
This norm turns out to be a \textit{cross norm}, i.e., $\|x
\otimes y\|_{\gamma} = \|x\| \|y\|$ for all $(x,y) \in X \times
Y$; and, the completion of the normed space $X \otimes Y$ with
respect to this norm is  usually denoted by
$X \otimes^\gamma Y$.

Let $\mA$ and $\mB$ be Banach algebras. Then, there is a natural
multiplication structure on their algebraic tensor product $\mA
\otimes \mB$ satisfying $(a_1\otimes b_1)(a_2\otimes
b_2)=a_1a_2\otimes b_1b_2$ for all $a_i \in \mA$ and $b_i \in \mB$,
$i=1,2$. And, there are various ways of imposing a normed algebra
structure on $\mA \otimes \mB$.  For instance, completing $\mA \otimes
\mB$ with respect to the projective norm gives a Banach algebra
structure on $\mA \obp \mB$, which occupies a prominent space  in the
theory of tensor products of Banach algebras.

\subsubsection{Generalized group algebras} \label{generalised group algebras}
Let $G$ be a locally compact group, $\mA$ be a Banach algebra and $\mu$ denote the Haar measure on $G$. Then,
recall that, $L^1(G,\mA)$ denotes the space of all Bochner integrable
functions from $G$ to $\mA$ with the norm $||f||_1:=\int_G||f||d\mu$
and forms a Banach algebra with respect to the convolution product
defined as $f\star g(x)=\int_Gf(s)g(s^{-1}x)ds$ - see \cite{Kaniuth}
for further details on generalized group algebras. When $G$ is a
discrete group, then $L^1(G,\mA)$ is nothing but the collection of all
absolutely summable $\mA$-valued functions and $||f||_1=\sum_{g\in
  G}||f(g)||$ while the convolution reduces to $f\ast g(x)=\sum_{s\in G}f(s)g(s^{-1}x)$ (notice that this is a sum of the elements of $\mA$). We will use the notion $\ell^1(G,\mA)$ for the $\mA$-valued group algebra when $G$ is discrete.

The following natural identifications are well known  and will be used ahead - see, for
instance, \cite{Kaniuth, Ryan}.
\begin{theorem}\label{obp-facts}
  \begin{enumerate}
  \item For any two Banach spaces $X$ and $Y$, there is a canonical
    surjective isometry $\theta: B(X, Y^*) \to (X \obp Y)^* $
    satisfying
    \[
\theta(f)(x\obp y) = f(x)(y) \text{ for all } f \in B(X, Y^*), x \in X, y \in Y.
    \]
    ( $B(X,Y^*)$ denotes the space of all bounded linear maps from $X$ to $Y^*$).
    
\item For any locally compact group $G$ and Banach algebra $\mA$, there
  exists an isometric algebra isomorphism $T:L^1(G)\obp \mA\to L^1(G,\mA)$
  satisfying
  \[
  T(f\otimes a)(g)=f(g)a \text{ for all } f \in f \in L^1(G), a \in \mA, g \in G.
  \]
  \end{enumerate}
\end{theorem}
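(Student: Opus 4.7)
For part (1), the plan is to invoke the universal property of the projective tensor product: bounded bilinear maps $X \times Y \to \C$ are in canonical isometric correspondence with elements of $(X \obp Y)^*$. Given $f \in B(X, Y^*)$, I would first produce the associated bilinear form $\varphi_f(x,y) = f(x)(y)$, note that $\|\varphi_f\| = \sup_{\|x\|, \|y\| \le 1} |f(x)(y)| = \|f\|$, and define $\theta(f)$ to be its unique lift to $(X \obp Y)^*$. The inverse sends $\phi \in (X \obp Y)^*$ to the map $x \mapsto [y \mapsto \phi(x \otimes y)]$, which lies in $B(X, Y^*)$ with the same norm by the uniform boundedness of $\phi$ on rank-one tensors. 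A check on elementary tensors verifies that these two constructions are mutually inverse.

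For part (2), I would define $T$ on the algebraic tensor product by $T(f \otimes a)(g) = f(g) a$ and extend bilinearly via the universal property. The triangle inequality together with the definition of $\|\cdot\|_\gamma$ immediately yields $\|T(u)\|_1 \le \|u\|_\gamma$, so $T$ extends to a contraction from $L^1(G) \obp \mA$ into $L^1(G, \mA)$. To verify the homomorphism property it suffices to check it on elementary tensors: $(T(f_1 \otimes a_1) \star T(f_2 \otimes a_2))(g) = \int_G f_1(s) f_2(s^{-1} g) \, a_1 a_2 \, ds = (f_1 \star f_2)(g) \cdot a_1 a_2 = T((f_1 \star f_2) \otimes (a_1 a_2))(g)$, and then extend by bilinearity and continuity of the convolution product on both sides.

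The hard part will be promoting $T$ from a contraction to an isometric surjection. Here I would exploit the density of $\mA$-valued simple functions in $L^1(G, \mA)$: every simple function $\sum_i \chi_{E_i} a_i$ with pairwise disjoint $E_i$ of finite measure is $T(\sum_i \chi_{E_i} \otimes a_i)$, so $T$ has dense range. For such representatives the definition of $\|\cdot\|_\gamma$ gives $\|\sum_i \chi_{E_i} \otimes a_i\|_\gamma \le \sum_i \mu(E_i)\|a_i\|$, while disjointness of the $E_i$ forces $\|T(\sum_i \chi_{E_i} \otimes a_i)\|_1 = \sum_i \mu(E_i)\|a_i\|$; combined with the contractive estimate, all three quantities coincide. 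Thus $T$ is an isometry on a dense subspace and extends to the desired isometric algebra isomorphism, and the discrete case $\ell^1(G, \mA) \cong \ell^1(G) \obp \mA$ is the specialization of the above to counting measure.
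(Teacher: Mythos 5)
Your proof is correct and follows the standard route: the paper itself offers no proof of this theorem, merely citing \cite{Kaniuth, Ryan}, and your argument (the universal property of $\obp$ identifying $(X\obp Y)^*$ with bounded bilinear forms and hence with $B(X,Y^*)$ for part (1); the sandwich estimate $\|T(u)\|_1\le\|u\|_\gamma\le\sum_i\mu(E_i)\|a_i\|=\|T(u)\|_1$ on disjointly supported simple tensors, together with their density on both sides, for part (2)) is exactly the argument found in those references. Nothing further is needed.
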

\noindent The following elementary observations which are easy to prove will be used
ahead. 
\begin{lemma}\label{countable-support}
	Let $G$ be an infinite discrete group and $\mA$ be a Banach
        algebra. Then,
        \begin{enumerate}
\item if $x= \sum_{n\in F} \delta_{g_n} \ot a_n \in L^1(G) \obp \mA$
  for some $F \subseteq \N$, a  collection $\{g_n: n \in F\} \subseteq G$ with no repetitions and a collection $\{a_n: n
  \in F\} \subset \mA$,  then $\|x\|_\gamma= \sum_{n\in F}\|a_n\|$; and,
\item for each $x\in L^1(G) \obp \mA$, there exists
        an $F \subset \N$, a  collection $\{h_n: n \in F\}\subset G$ with no repetitions and
an absolutely summable collection        $\{b_n: n \in F\}$ in $ \mA\setminus \{0\}$ such that $x = \sum_{n\in F}
        \delta_{h_n} \ot b_n $. 
\end{enumerate}
\end{lemma}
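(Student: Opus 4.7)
The plan is to deduce both statements from the isometric algebra isomorphism $T: L^1(G) \obp \mA \to L^1(G,\mA)$ of Theorem~\ref{obp-facts}(2), since for a discrete group $G$ an element of $L^1(G,\mA)$ is just an absolutely summable $\mA$-valued function on $G$. Under $T$ one has $T(\delta_g \ot a) = a\,\chi_{\{g\}}$, so basic tensors correspond to point-masses in $L^1(G,\mA)$.

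For (1), the upper bound $\|x\|_\gamma \le \sum_{n\in F}\|a_n\|$ is immediate from the triangle inequality together with the cross-norm property $\|\delta_{g_n}\ot a_n\|_\gamma = \|a_n\|$; when $F$ is infinite the assumption $x\in L^1(G)\obp \mA$ already forces convergence, so the tails $\sum_{n\ge N}\|a_n\|$ tend to $0$. For the reverse inequality, I would apply the continuous linear map $T$ to the (possibly infinite) sum and use that the $g_n$ are pairwise distinct, so the images $a_n\chi_{\{g_n\}}$ have pairwise disjoint supports. This gives
\[
\|x\|_\gamma = \|T(x)\|_1 = \sum_{g\in G}\|T(x)(g)\| = \sum_{n\in F}\|a_n\|.
\]

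For (2), I would transport $x$ to $y := T(x) \in L^1(G,\mA)$. Finiteness of $\|y\|_1 = \sum_{g\in G}\|y(g)\|$ forces the support $S := \{g\in G : y(g)\neq 0\}$ to be at most countable; enumerate it as $\{h_n : n\in F\}$ with $F\subseteq \N$ and pairwise distinct $h_n$, and set $b_n := y(h_n) \in \mA\setminus\{0\}$. Then $\sum_{n\in F}\|b_n\| = \|y\|_1 < \infty$, so by part (1) the series $\sum_{n\in F} \delta_{h_n}\ot b_n$ converges in $L^1(G)\obp \mA$. Applying $T$ term by term (by continuity) I would obtain $T\bigl(\sum_{n\in F}\delta_{h_n}\ot b_n\bigr) = y = T(x)$, and injectivity of $T$ yields the desired representation $x = \sum_{n\in F}\delta_{h_n}\ot b_n$.

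Neither part presents a genuine obstacle; the mild point requiring care is the interchange of $T$ with infinite series, but this is immediate from continuity, and the key observation in part (1) is simply that distinctness of the $g_n$ makes the $L^1$-norm of $T(x)$ collapse into an unambiguous sum of the $\|a_n\|$.
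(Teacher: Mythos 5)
The paper gives no proof of this lemma at all---it is introduced as an ``elementary observation which is easy to prove''---so there is no argument to compare against; your proof is correct and is evidently the intended route, namely transporting everything through the isometric isomorphism $T$ of \Cref{obp-facts}(2) and using that for discrete $G$ the space $L^1(G,\mA)$ is just $\ell^1(G,\mA)$ with norm $\sum_{g\in G}\|y(g)\|$, where distinctness of the $g_n$ gives disjoint supports and hence the exact norm identity in (1), and finiteness of the norm gives countable support and the representation in (2). The only cosmetic remark is that in (2) the convergence of $\sum_{n\in F}\delta_{h_n}\ot b_n$ follows already from absolute summability and completeness of $L^1(G)\obp\mA$, without invoking part (1).
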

\section{Annihilators in Bidual of $\ell^1(G,\mA)$}
Throughout this section, $G$ will denote an infinite discrete
group. We will give a proof of the fact that the convolution algebra $\ell^1(G,\ell^1(\mathbb N))$ is not S.A.I (where $\ell^1(\mathbb N)$ is equipped with pointwise multiplication). 
\begin{theorem}
The Banach algebra $(\ell^1(G,\ell^1(\mathbb N)),\ast)$ is not strongly Arens irregular. 
\end{theorem}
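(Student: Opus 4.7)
My strategy is to construct, for each non-principal ultrafilter $\mathcal U$ on $\mathbb N$ and each fixed $g_0 \in G$, a two-sided annihilator $\varphi_{\mathcal U}\in\mA^{**}$ (for both Arens products) that lies outside $J(\mA)$, where $\mA:=\ell^1(G,\ell^1(\mathbb N))$. Since every two-sided annihilator automatically belongs to both topological centres, the existence of such a $\varphi_{\mathcal U}$ immediately produces elements of $Z_t^{(l)}(\mA^{**})$ and $Z_t^{(r)}(\mA^{**})$ outside $\mA$, which is precisely the failure of strong Arens irregularity.

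Via Theorem~\ref{obp-facts} I identify $\mA$ with $\ell^1(G)\obp\ell^1(\mathbb N)$ and $\mA^*$ with $\ell^\infty(G\times\mathbb N)$, writing $\Phi$ for the representative of a given $\omega\in\mA^*$. Put $f_n:=\delta_{g_0}\otimes e_n$, where $e_n\in\ell^1(\mathbb N)$ is the $n$-th basis vector; this is a unit-norm sequence in $\mA$, and I define $\varphi_{\mathcal U}(\omega):=\lim_{n\to\mathcal U}\omega(f_n)$ for $\omega\in\mA^*$, giving an element of $\mA^{**}$. To see $\varphi_{\mathcal U}\notin J(\mA)$, I write a putative pre-image as $x = \sum_m\delta_{h_m}\otimes b_m$ via Lemma~\ref{countable-support}(2). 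Pairing with the functional $\omega_k\leftrightarrow(g,n)\mapsto\delta_{g,g_0}\delta_{n,k}$ gives $\varphi_{\mathcal U}(\omega_k)=0$ by non-principality, which forces $b_{m_0}(k)=0$ for all $k$ whenever some $h_{m_0}=g_0$; but the functional $\omega_\infty\leftrightarrow(g,n)\mapsto\delta_{g,g_0}$ then produces $\omega_\infty(x)=0$ versus $\varphi_{\mathcal U}(\omega_\infty)=1$, a contradiction.

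Two of the four annihilation identities follow from a direct computation that exploits the pointwise product on $\ell^1(\mathbb N)$: for every $a\in\mA$,
\[
(f_n\ast a)(\chi,k) = \delta_{k,n}\,a(g_0^{-1}\chi,n),\qquad (a\ast f_n)(\chi,k) = \delta_{k,n}\,a(\chi g_0^{-1},n),
\]
so both convolutions are concentrated on $G\times\{n\}$ with $\ell^1$-mass equal to a single $n$-slice of $|a|$; the total of these slice masses over $n$ is $\|a\|_1<\infty$, so they tend to $0$ as $n\to\infty$. Therefore $\omega(a\ast f_n),\omega(f_n\ast a)\to 0$ along $\mathcal U$ for every $\omega$, which says ${}_{\varphi_{\mathcal U}}\omega$ and $\omega_{\varphi_{\mathcal U}}$ are the zero functional on $\mA$. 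From the definitions one gets $\psi\Box\varphi_{\mathcal U}(\omega)=\psi({}_{\varphi_{\mathcal U}}\omega)=0$ and $\varphi_{\mathcal U}\Diamond\psi(\omega)=\psi(\omega_{\varphi_{\mathcal U}})=0$ for every $\psi\in\mA^{**}$.

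The main obstacle is the remaining pair $\varphi_{\mathcal U}\Box\psi=\psi\Diamond\varphi_{\mathcal U}=0$, where $\psi$ can be arbitrary and we lose the norm-control that made the previous step easy. Unwinding, these reduce to $\lim_{n\to\mathcal U}\psi(\omega_{f_n})=0$ and $\lim_{n\to\mathcal U}\psi({}_{f_n}\omega)=0$. A direct computation identifies $\omega_{f_n}$ and ${}_{f_n}\omega$ with the elements of $\ell^\infty(G\times\mathbb N)$ given by $(g,k)\mapsto\delta_{k,n}\Phi(g_0g,n)$ and $(g,k)\mapsto\delta_{k,n}\Phi(gg_0,n)$ respectively; both are supported on $G\times\{n\}$ with $\ell^\infty$-norm at most $\|\omega\|$, so as $n$ varies we obtain sequences in $\ell^\infty(G\times\mathbb N)$ of uniformly bounded norm with pairwise disjoint supports. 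The key Banach-space lemma I will then invoke is: for any bounded sequence $(\xi_n)$ in $\ell^\infty(G\times\mathbb N)$ with pairwise disjoint supports, $\psi(\xi_n)\to 0$ for every $\psi\in\ell^\infty(G\times\mathbb N)^*$. The proof is to choose unimodular scalars $\alpha_n$ with $\alpha_n\psi(\xi_n)=|\psi(\xi_n)|$; the disjointness of supports yields $\|\sum_{n\le N}\alpha_n\xi_n\|_\infty\le\sup_n\|\xi_n\|_\infty$, so
\[
\sum_{n\le N}|\psi(\xi_n)| = \psi\!\Bigl(\sum_{n\le N}\alpha_n\xi_n\Bigr) \le \|\psi\|\sup_n\|\xi_n\|_\infty
\]
for all $N$, forcing $\sum_n|\psi(\xi_n)|<\infty$ and in particular $\psi(\xi_n)\to 0$. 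Applying this to $\xi_n=\omega_{f_n}$ and $\xi_n={}_{f_n}\omega$ finishes the annihilator verification, and the theorem follows: $\varphi_{\mathcal U}\in(Z_t^{(l)}(\mA^{**})\cap Z_t^{(r)}(\mA^{**}))\setminus J(\mA)$, so $\mA$ is not strongly Arens irregular.
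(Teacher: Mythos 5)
Your proof is correct, but it takes a genuinely different route from the one the paper uses for this particular theorem. The paper rewrites $\ell^1(G,\ell^1(\N))$ as $\ell^1(\N,X)$ with $X=(\ell^1(G),\ast)$ and pointwise outer multiplication, builds a projection of $\ell^1(\N,X)^{**}$ onto $\ell^1(\N,X^{**})$ with complement $i(c_0(\N,X^*))^{\perp}$, observes that $\omega_h$ and ${}_{h}\omega$ always land in $c_0(\N,X^*)$ for $h\in\ell^1(\N,X)$, and concludes that every nonzero element of the $c_0$-annihilator is a \emph{right} annihilator for both Arens products; this yields a whole (large) subspace of $Z_t^{(r)}(\mA^{**})\setminus\mA$ in one stroke, but the annihilation is only one-sided and the argument leans on the special $\ell^1$--$c_0$--$\ell^\infty$ duality of the outer variable. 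You instead construct explicit ultrafilter functionals $\varphi_{\mU}=w^*\text{-}\lim_{\mU}J(\delta_{g_0}\otimes e_n)$ and verify \emph{two-sided} annihilation for both products, which simultaneously disproves left and right strong Arens irregularity; this is essentially the strategy the paper itself deploys later (Proposition~\ref{phi-U} and Theorem~\ref{mainth}) for the general case of an algebra containing a complemented copy of $(\ell^1(\N),\bullet)$, specialized here to $\mA=\ell^1(\N)$ and $\rho=\mathrm{id}$. Your verification of the hard pair $\varphi_{\mU}\Box\psi=\psi\Diamond\varphi_{\mU}=0$ via the lemma that a bounded, disjointly supported sequence in $\ell^\infty(G\times\N)$ is weakly null is cleaner and more transparent than the gliding-hump estimate the paper carries out in Theorem~\ref{mainth}, and your computation of the slice masses $\|f_n\ast a\|_1=\sum_{g}|a(g,n)|\to 0$ correctly handles the easy pair. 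All four annihilation identities, the non-membership argument $\varphi_{\mU}\notin J(\mA)$ (via the functionals $\omega_k$ and $\omega_\infty$), and the final deduction of non-SAI check out against the paper's conventions for $\Box$, $\Diamond$, $\omega_a$ and ${}_a\omega$.
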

\begin{proof}
	Since, $\left(\ell^1(\mathbb Z,\ell^1(\mathbb N),\ast\right):=\left(\ell^1(\mathbb Z),\ast\right)\obp\left(\ell^1(\mathbb N),\bullet\right)=\left(\ell^1(\mathbb N,(\ell^1(\mathbb Z),\ast)),\bullet\right)$ and $\ell^1(\mathbb Z)$ is Arens irregular, we know by \cite{Ulger}(Cor. 3.5 ) that $\left(\ell^1(\mathbb Z,\ell^1(\mathbb N)),\ast\right)$ is not Arens regular. Let $X$ be the Banach algebra $X=(\ell^1(G),\ast)$. Then  by above identifications, our  generalized group algebra $\left(\ell^1(G,\ell^1(\mathbb N)),\ast\right)$ is isometrically isomorphic to the Banach algebra $(\ell^1(\mathbb N,X),\bullet)$.\\ Consider the trivial embedding $i:c_0(\mathbb N,X^*)\to \ell^\infty(\mathbb N, X^*)=\ell^1(\mathbb N, X)^*$ and a natural map $$K:\ell^1(\mathbb N, X^{**})\to \ell^1(\mathbb N,X)^{**}:=\ell^\infty(\mathbb N,X^*)^*$$ where $K(f)(g)=\sum_{n\in \mathbb N}\left<f(n),g(n)\right>$ for each $f\in \ell^1(\mathbb N,X^{**})$ and $g\in \ell^\infty(\mathbb N,X^*)$. Clearly $K$ is an isometry(not necessarily surjective). Let $P=K\circ i^*:\ell^1(\mathbb N, X)^{**}\to \ell^1(\mathbb N,X)^{**}$. One can easily verify that the map $P$ is a projection i.e $P^2=P$ and the range of $P$ is $\ell^1(\mathbb N,X^{**})$. Thus, we have the direct sum decomposition of Banach spaces $\ell^1(\mathbb N, X)^{**}=\text{Range}(P)\oplus \text{Range}(I-P)$. But $\text{Range}(I-P)=i\left(c_0(\mathbb N, X^{*})\right)^\perp$. Hence, we have the following Banach space decomposition \begin{eqnarray}\label{decompose}
		\ell^1(\mathbb N, X)^{**}=\ell^1(\mathbb N, X^{**})\oplus i(c_0(\mathbb N, X^{*}))^\perp
	\end{eqnarray}
	
	Now let $\theta:\ell^\infty(\mathbb N,X^*)\to \ell^1(\mathbb N,X)^{*}$ be  the dual identification given by $\theta(f)(g)=\sum_{n\in\mathbb N}\left<f(n),g(n)\right>$. For any $f\in \ell^\infty(\mathbb N, X^*)$ and $h,g\in \ell^1(\mathbb N, X)$, we have \begin{align*}\left(\theta(f)\right)_h(g)&={_h}\left(\theta(f)\right)(g)\\&=\theta(f)(h\bullet g)\\&=\sum_{n\in \mathbb N}\left<f(n),h(n)\ast g(n)\right>\\&=\sum_{n\in \mathbb N}\left<f(n)\ast\widecheck{h(n)},g(n)\right>\\&=\theta(f\bullet \hat{h})(g)
	\end{align*}
	where $\widecheck{h(n)}(m)=h(n)(-m)$ and $\hat{h}\in \ell^1(\mathbb N,X)$ is defined as $\hat{h}(n)=\widecheck{h(n)}$. It can be easily seen that $f\bullet \hat h\in c_0(\mathbb N, X^*)$. Thus $\omega_h={_h}w\in \theta(c_0(\mathbb N, X^*))$ for each $h\in \ell^1(\mathbb N,X)$ and $w\in \ell^1(\mathbb N,X)^*$. Now if $\nu\in \ell^1(\mathbb N, X)^{**}$ and $\eta\in c_0(\mathbb N, X^{***})^\perp\subset \ell^\infty(\mathbb N,X^{***})^* $, then for the restriction $\eta_o$ of $\eta$ to $\ell^\infty(\mathbb N,X^*)$ we have 
	\begin{align*}
		\nu\square\eta_o(\omega)&=\nu(\omega_{\eta_o})\\&=0&& \left(\text{because}~ \omega_{\eta_o}(h)=\eta_o(\omega_h)=0~\text{as}~\omega_h\in c_0(\mathbb N, X^*)\right)
	\end{align*}
	Similarly, 
	\begin{align*}
		\nu\diamond \eta_o(\omega)&=\eta_o(\omega_{\nu_1})+\eta_o(\omega_{{\nu}_2})&& \left(\nu=\nu_1+\nu_2 ~\text{using the decomposition }~\ref{decompose}\right)\\&=\eta_o(\omega_{v_1})+0&&\left(\because\omega_{\nu_2}=0 \right)\\&=0&&\left(\because \omega_{\nu_1}\in c_o(\mathbb N, X^{***})\right)
	\end{align*}
	Thus, we see that $\eta_o$ is right annihilator with respect to both the Arens product on $\ell^1(\mathbb N, X)^{**}$. Further, $\eta_o$  does not belong to $\ell^1(\mathbb N, X)$. Hence, the right topological center of generalized group algebra $\left(\ell^1(G,\ell^1(\mathbb N)),\ast\right)^{**}:=\left(\ell^1(\mathbb N, X),\bullet\right)^{**}$ is non-trivial.
\end{proof}

For any general Banach algebra $\mA$, it is not always possible to take the above given approach for the topological center of $(\ell^1(G,\mA),\ast)$. The direct sum decomposition as in \ref{decompose} cannot be given for $\ell^1(G,\mA)^{**}$ for any arbitrary Banach algebra $\mA$. We propose a direct access to annihilators in the bidual of $\ell^1(G,\mA)^{**}$ with respect to both the Arens product, using non-principle ultrafilters in the case when $\mA$ contains a complemented algebraic copy of $\ell^1(\mathbb N)$. To do so, we first observe the following crucial result regarding Banach algebras which contains an isomorphic copy of $(\ell^1(\mathbb N),\bullet)$. We say that a closed subspace of $\mathcal B$ of a Banach space $\mathcal A$ is complemented in $\mathcal A$ if $\mathcal A=\mathcal B\oplus \mathcal C$ for some closed subspace $\mathcal C$ of $\mathcal A$, where $||(b,c)||=||b||+||c||$.
\begin{prop}\label{phi-U} 
	Let $\mA$ be a Banach algebra such that there exists an algebra isomorphism $\rho:(\ell^1(\mathbb N),\bullet)\to \mA$ onto the its image such that $\rho(\ell^1(\mathbb N))$ is complemented in $\mA$ and let  $X=\ell^1(G,\mA)$. Then, associated to each non-principal ultrafilter
	$\mU$ on $\N$, there exists a functional $\varphi_{\mU}$ in $X^{**}$ such that  $\varphi_{\mU} \notin 
	J(X)$. This functional is precisely defined as $$\varphi_\mU(\tau)=\lim_{\mU}\tau (\delta_{h_n}\otimes \rho(e_n))   \text{ for all }
	\tau \in X^*.$$
	where, $\{h_n\}_{n\in \mathbb N}$ is a countable set of elements in $G$.
\end{prop}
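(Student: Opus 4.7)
The plan is first to verify $\varphi_\mU$ is a well-defined element of $X^{**}$, then to show $\varphi_\mU\notin J(X)$ by producing, for every bounded sequence $c\in\ell^\infty(\mathbb N)$, a test functional $\tau_c\in X^*$ with $\tau_c(\delta_{h_n}\otimes\rho(e_n))=c_n$, and using non-principality of $\mU$ to derive a contradiction from the assumption $\varphi_\mU=J(x)$. Well-definedness is immediate: the cross-norm property gives $\|\delta_{h_n}\otimes\rho(e_n)\|_\gamma=\|\rho(e_n)\|\le\|\rho\|$, so for each $\tau\in X^*$ the scalar sequence $\{\tau(\delta_{h_n}\otimes\rho(e_n))\}_n$ is bounded by $\|\tau\|\,\|\rho\|$, its $\mU$-limit exists in $\mathbb C$, and $\varphi_\mU$ is linear with $\|\varphi_\mU\|\le\|\rho\|$.

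The test functionals come from the complementation hypothesis: let $P:\mA\to\rho(\ell^1(\mathbb N))$ be the bounded projection and set $\eta_n(a)=e_n^*(\rho^{-1}(Pa))\in\mA^*$, so that $\eta_n(\rho(e_m))=\delta_{nm}$ and $\sup_n\|\eta_n\|\le\|P\|\,\|\rho^{-1}\|$. For each $c=(c_n)\in\ell^\infty(\mathbb N)$, the recipe $F_c(h_n)=c_n\eta_n$ and $F_c(g)=0$ elsewhere defines (since the $h_n$ are distinct and $\sup_n\|\eta_n\|<\infty$) an element of $\ell^\infty(G,\mA^*)$, which, via the identification $X^*\cong B(\ell^1(G),\mA^*)\cong\ell^\infty(G,\mA^*)$ supplied by Theorem \ref{obp-facts}(1), corresponds to a bounded $\tau_c\in X^*$ with $\tau_c(\delta_{h_n}\otimes\rho(e_n))=c_n$.

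Assuming $\varphi_\mU=J(x)$, apply Lemma \ref{countable-support}(2) to write $x=\sum_{k\in F}\delta_{g_k}\otimes b_k$ with the $g_k$ distinct and $\sum_k\|b_k\|<\infty$. Expanding $\tau_c$ against $x$ yields $\tau_c(x)=\sum_n c_n\gamma_n$, where $\gamma_n=\eta_n(b_k)$ if $g_k=h_n$ for some (necessarily unique) $k$ and $\gamma_n=0$ otherwise; the sequence $(\gamma_n)$ lies in $\ell^1(\mathbb N)$ because $\sup_n\|\eta_n\|<\infty$ and $\sum_k\|b_k\|<\infty$. The assumed identity $\lim_\mU c=\sum_n c_n\gamma_n$ for every $c\in\ell^\infty(\mathbb N)$ is the contradiction: taking $c=e_{n_0}$ forces $\gamma_{n_0}=\lim_\mU e_{n_0}=0$ since $\mU$ is non-principal, hence $\gamma\equiv 0$, but then $c=\mathbf 1$ yields $1=\lim_\mU\mathbf 1=0$.

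The main obstacle is constructing the family $\{\tau_c\}$: one needs a uniformly norm-bounded biorthogonal sequence in $\mA^*$ for $\{\rho(e_n)\}\subset\mA$ that can be assembled into a bounded element of $\ell^\infty(G,\mA^*)$, and this is precisely what the complementation of $\rho(\ell^1(\mathbb N))$ in $\mA$ buys. After that, the final step is the familiar observation that a non-principal ultrafilter limit on $\ell^\infty$ cannot be represented by pairing with an element of $\ell^1$.
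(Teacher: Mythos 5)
Your proof is correct. It uses the same basic ingredients as the paper's proof --- the identification $X^*\cong B(\ell^1(G),\mA^*)$ from \Cref{obp-facts}, test functionals supported on single group elements and built from the coordinate functionals that the embedded copy of $\ell^1(\N)$ provides, and the representation of $x$ from \Cref{countable-support} --- but organizes them along a genuinely different route. The paper argues in two separate steps: it exhibits one ``diagonal'' functional $\eta=\sum_i\alpha_{h_i}E_i^*$ (with $E_i^*$ Hahn--Banach extensions of the coordinate evaluations) to witness $\varphi_\mU\neq 0$, and then shows that $\varphi_\mU$ vanishes on the span $\Delta$ of the functionals $\mT_{g,E^*}$ while $J(x)$ does not vanish on $\Delta$ for any $x\neq 0$. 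You instead index a single family of test functionals $\tau_c$ by $c\in\ell^\infty(\N)$, arrange $\tau_c(\delta_{h_n}\otimes\rho(e_n))=c_n$, and reduce the whole statement to the classical fact that the $\mU$-limit functional on $\ell^\infty(\N)$ cannot be realized as pairing against an $\ell^1$ sequence; your $\tau_{e_{n_0}}$ play the role of the paper's $\mT_{h_{n_0},E^*}$ and $\tau_{\mathbf 1}$ the role of $\eta$. Your packaging is arguably cleaner: it replaces the Hahn--Banach extensions by the explicit uniformly bounded biorthogonal system $\eta_n=e_n^*\circ\rho^{-1}\circ P$ coming from the projection (thereby actually using the complementation hypothesis, which the paper's argument for this proposition does not strictly need), and it merges the two claims of the paper into one contradiction. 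The only points worth being explicit about, and which you do address, are that the $h_n$ must be distinct for $F_c$ to be well defined, and that the $g_k$ in the representation of $x$ are distinct so that each $\gamma_n$ picks up at most one term.
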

\begin{proof}
	Let  $\mU$ be a non-principal ultrafilter on $
	\mathbb N$.  Fix a countable set $S=\{h_n\}_{n=1}^\infty$ consisting of  distinct elements of $G$. 
	
	Notice that, for each $\tau \in X^*$, via the identifications illustrated in \Cref{obp-facts}, we have
	\[
	|\tau
	(\delta_{h_n}\otimes \rho(e_n))| \leq \|\tau\|\|\rho\| \text{ for all } n \in \N
	.
	\]
	Thus, since limit along an ultrafilter in a compact set always
        exists, we have a map $\varphi_\mU: X^* \to \C$ given by
	\begin{equation}\label{phi-U-defn}
	\varphi_\mU(\tau)=\lim_{\mU}\tau (\delta_{h_n}\otimes \rho(e_n))   \text{ for all }
	\tau \in X^*.
	\end{equation}
	Clearly, $\varphi_\mU$ is a bounded linear functional on
        $X^*$.
	We first assert that $\varphi_\mU \neq 0$. Consider $\eta:\ell^1(G)\to \mA^*$ defined
	as
	\[
	\eta(\sum_{g\in
		G}\alpha_g\delta_g)=\sum_{i=1}^\infty\alpha_{h_i}E_i^*.
	\]
	where, $E_i^*\in \mA^*$ are some fixed Hahn-Banach extensions of point evaluations $E_i^*(\rho(\{x_j\}_{j=1}^\infty))=x_i$ on the subpsace $\rho(\ell^1(\mathbb N))$ of $\mA$.
	One can
	easily verify $\eta \in B(\ell^1(G), \mA^*)$ and that $||\eta||\leq 1
	$. Clearly, $\varphi_{\mU}(\eta)=1$. Hence, $\varphi_{\mU}\neq 0$.
	\medskip

	Next, consider the subset $\Delta:=\operatorname{span}\{\mT_{g,E^*}~\vert ~g\in
	G, E^*\in \mA^*_1\}$ of $X^*$, where $\mT_{g,E^*}: \ell^1(G) \to \mA^*$ is
	defined as
	\[
	\mT_{g,E^*}(\sum_{h\in G}c_h\delta_h)=c_gE^*.
	\]
	(One can easily see that $\mT_{g,E^*}\in B(\ell^1(G), \mA^*)$ and are uniformly bounded and, hence, in
	view of \Cref{obp-facts}, $\Delta$ is a well defined non-trivial family
	in $X^*$.)
	
	We now claim that $J(x)$ does not vanish on $\overline{\Delta}$ for any
	$0 \neq x\in X$, while $\varphi_{\mU}$ vanishes (on $\Delta$ and
	hence) on $\overline{\Delta}$. Since $\varphi_\mU \neq 0$, this will then imply that $\varphi_\mU
	\notin J(X)$.
	
	The latter claim is evident from the fact that
	$\varphi_{\mU}(\mT_{g,i})=0$ for each $g\in G$ and $i\in \mathbb
	N$.
	
	To prove the first part of the claim, suppose $x \in
        X\setminus \{0\}$. Then,  as observed in \Cref{countable-support},
        there exist countable sets $F\subset \N$, $\{u_j : j\in F\}
        \subset G$ and $\{b_j: j \in F\}\subset \mA\setminus\{0\}$ such that
        $x=\sum_{j=1}^\infty\delta_{u_j}\otimes b_j$. 
For some fixed $i$, choose $E^*$ such that $E^*(b_i)\neq 0$.
        Now, notice that \begin{align*}
         J(x)(\mT_{u_i,E^*})&=\mT_{u_i,E^*}(x)\\&=\sum_{j=1}^\infty\mT_{u_i,E^*}(\delta_{u_j})(b_j)\\&=E^*(b_{i})\neq 0
	\end{align*}
	Thus, $\varphi_\mU\in X^{**}\setminus J(X)$.
\end{proof}

\begin{theorem} \label{mainth}
	Let $G$ be an infinite discrete group, $\mA$ be a Banach
        such that $\rho:(\ell^1(\mathbb N),\bullet)\to \mA$ is an isomorphism onto its image such that the image is complemented in $\mA$ and $\mU$ be a
        non-principal ultrafilter on $ \mathbb N$.  Then,
        $\varphi_{\mathcal U}$ is left annihilator of $\ell^1(G,\mathcal A)^{**}$ for both the Arens products.

\end{theorem}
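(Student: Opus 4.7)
The approach is to unpack the two Arens products applied to $\varphi_\mU$, reduce the question to a single ultrafilter-limit identity, and then exploit the orthogonal-idempotent relations $\rho(e_n)\rho(e_m)=\delta_{nm}\rho(e_n)$ inherited from the pointwise product on $\ell^1(\N)$ together with the complementation of $\rho(\ell^1(\N))$ in $\mA$. Write $X:=\ell^1(G,\mA)$ and $a_n:=\delta_{h_n}\ot \rho(e_n)\in X$. Directly from the definitions,
\[
(\varphi_\mU\,\Box\,\psi)(\omega)=\varphi_\mU({}_\psi\omega)=\lim_{\mU}\psi(\omega_{a_n}),\qquad(\varphi_\mU\,\Diamond\,\psi)(\omega)=\psi(\omega_{\varphi_\mU}),
\]
where $\omega_{\varphi_\mU}(a)=\varphi_\mU({}_a\omega)=\lim_{\mU}\omega(a_n\ast a)$. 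Since the $\Diamond$-case is precisely the $\Box$-case when $\psi=J(v)$ with $v\in X$, the whole theorem reduces to the single statement
\[
\lim_{\mU}\psi(\omega_{a_n})=0\quad\text{for every }\psi\in X^{**}\text{ and }\omega\in X^*.
\]

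The next step is to compute $\omega_{a_n}$ explicitly. From $(a_n\ast b)(g)=\rho(e_n)\,b(h_n^{-1}g)$ and the identification $X^*\cong B(\ell^1(G),\mA^*)$ of \Cref{obp-facts}, the functional $\omega_{a_n}$ corresponds to the bounded operator $\delta_{g'}\mapsto L_{\rho(e_n)}^*\omega_{h_n g'}$. Writing $\mA=\rho(\ell^1(\N))\oplus\mathcal C$ and decomposing any $b\in\mA$ as $\rho(c)+d$, orthogonality gives $\rho(e_n)b=c_n\rho(e_n)+\rho(e_n)d$; since $c\in\ell^1$ forces $c_n\to 0$ and the complementation controls the cross term $\rho(e_n)d$, a dominated-convergence argument combined with the countable-support decomposition of \Cref{countable-support} yields $\|a_n\ast a\|_1\to 0$ for every $a\in X$. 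In particular $\omega(a_n\ast a)\to 0$ for every $\omega\in X^*$, which handles the $\Diamond$-case completely and the $\Box$-case whenever $\psi\in J(X)$.

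The main obstacle is lifting the convergence from $\psi\in J(X)$ to arbitrary $\psi\in X^{**}$: pointwise nullity of $\omega_{a_n}$ on $X$ is only a weak$^*$ statement in $X^*$, while what is needed is a weak one. The cleanest route is to show that the functional ${}_\psi\omega\in X^*$ lies in the closure $\overline\Delta$ of the subspace $\Delta$ introduced in the proof of \Cref{phi-U}; since $\varphi_\mU$ vanishes on $\Delta$, continuity then forces $\varphi_\mU({}_\psi\omega)=0$. Equivalently, one must verify that the $\mA^*$-valued coefficient function $g\mapsto[c\mapsto\psi(\omega_{\delta_g\ot c})]$ lies in $c_0(G,\mA^*)$, i.e.\ its norm decays to $0$ as $g$ exits every finite subset of $G$. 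The explicit direct-sum decomposition used in the proof of Theorem 3.1 is specific to $\mA=\ell^1(\N)$ and unavailable here, so a more delicate approximation driven by the structure of the $\rho(e_n)$ and the complementation hypothesis will be the technical heart of the proof.
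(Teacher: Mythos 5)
Your reduction of the theorem to the single statement $\lim_{\mU}\psi(\omega_{a_n})=0$ for all $\psi\in X^{**}$ and $\omega\in X^*$ (with $a_n=\delta_{h_n}\ot\rho(e_n)$) is correct and matches the paper's setup, and your treatment of the $\Diamond$-product, together with the $\Box$-product against $\psi\in J(X)$, via $\omega(a_n\ast a)\to 0$ is essentially the second half of the paper's proof. But the case you yourself flag as ``the technical heart'' --- arbitrary $\psi\in X^{**}$ in the $\Box$-product, i.e.\ upgrading $(\omega_{a_n})_n$ from weak$^*$-null to weakly null in $X^*$ --- is precisely the part you have not proved, and the route you sketch for it does not work. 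Membership of ${}_\psi\omega$ in $\overline{\Delta}$ would require $c_0$-decay of the coefficient function $g\mapsto[c\mapsto\psi(\omega_{\delta_g\ot c})]$ uniformly over the unit ball of $\mA$, and there is no reason for such decay: the operators $u\mapsto(\delta_g\ot c)\ast u$ do not become small as $g$ leaves finite sets (for scalar coefficients $\omega_{\delta_g}$ is just a translate of $\omega$, of constant norm, and an invariant-mean-type $\psi$ sends the constant function to a nonzero value). What makes the theorem true is vanishing only along the diagonal directions $(\delta_{h_n},\rho(e_n))$, driven by the orthogonality $\rho(e_n)\rho(e_m)=\delta_{nm}\rho(e_n)$, not any $c_0(G,\mA^*)$ property of ${}_\psi\omega$.

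The paper closes this gap with a different idea. Supposing $|\psi(\omega_{a_{n_t}})|\ge\epsilon$ along a subsequence, it forms the unimodular combinations $h_N=\sum_{t=1}^{N}c_{n_t}\,\omega_{a_{n_t}}$ and shows $\|h_N\|\le C\|\omega\|$ uniformly in $N$: for any $x=\sum_i\lambda_i\,\delta_{g_i}\ot a_{s_i}$ the orthogonality of the $\rho(e_{n_t})$ collapses the double sum over $t$ and $i$ to a single absolutely convergent sum dominated by $\|x\|$. Since $\psi(h_N)=\sum_{t\le N}|\psi(\omega_{a_{n_t}})|\ge N\epsilon$, this contradicts the boundedness of $\psi$. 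In effect, $\sum_n\omega_{a_n}$ is a weakly unconditionally Cauchy series in $X^*$, so every $\psi\in X^{**}$ is absolutely summable against it and in particular $\psi(\omega_{a_n})\to 0$. That uniform-boundedness argument is the ingredient missing from your proposal; without it (or a substitute) the theorem is not established.
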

\begin{proof}
  Let $ X:=\ell^1(G,\mA)$. Cosider $S :=\{h_n\} \subseteq G$ as in
  \Cref{phi-U}. Then, for any $\psi\in X^{**}$ and $\mT\in X^*$, we
  have
  \begin{align*}
  ( \varphi_\mU\square \psi)(\mT)&=\lim_{\mU}\ {_\psi}\mT(\delta_{h_n}\otimes \rho(e_n))\\
  &=\lim_{\mathcal U}\psi(\mathcal T_{\delta_{h_n}\otimes \rho(e_n)}).
  \end{align*}
	We claim that $\lim_{n}\psi(\mT_{\delta_{h_n}\otimes
          \rho (e_n)})=0$.

        To prove this we assume the contrary. Then, there exists an
        $\epsilon >0$ and an increasing sequence $\{n_t\}_{t\in
          \mathbb N}\subset \N$ such that
        $|\psi(\mT_{\delta_{h_{n_t}}\otimes \rho(e_{n_t})})|\geq \epsilon $
        for all $t\in \mathbb N$. Consider the functionals
        $h_N:=\sum_{t=1}^Nc_{n_t}\mT_{\delta_{h_{n_t}}\otimes
          \rho(e_{n_t})}$ for $N \in \N$, where
        $c_{n_t}:=\frac{\overline{\psi(\mT_{\delta_{h_{n_t}}\otimes
              \rho(e_{n_t})})}}{|\psi(\mT_{\delta_{h_{n_t}}\otimes
            \rho(e_{n_t})})|}$. Let  $x:=\sum_{i=1}^k\lambda_i\delta_{g_i}\otimes a_{s_i}\in
        \ell^1(G)\otimes \mA$ be an arbitrary element, where  $\{g_i : 1
        \leq i \leq k\}\subset G$ and $\{s_i : 1 \leq i \leq
        k\}\subset\N$ (with possible repetitions) are two infinite sets with possible repitions. Further, let $a_{s_i}=\rho(\sum_{j=1}^\infty x^{(s_i)}_je_j)+v_{s_i}$ (since image of $\rho$ is complemented in $\mA$).
       	\begin{align*}
	|h_N(x)|&=\left|\mT(\sum_{t=1}^N\sum_{i=1}^k\lambda_i c_{n_t}\delta_{h_{n_t}}\star\delta_{g_i}\otimes \rho(e_{n_t})\cdot a_{s_i})\right|\\
	&=\left|\mT(\sum_{t=1}^N\lambda_{n_t}c_{n_t}\delta_{h_{n_t}g_{n_t}}\otimes x_{n_t}^{(s_i)})\right|& & \textrm{($\because$~ $\rho(e_{n_t})\cdot a_{s_i}=x_{n_t}^{(s_i)}$ )}\\
&\leq||\mT||\sum_{t=1}^N|\lambda_{n_t}||c_{n_t}||x_{n_t}^{(s_t)}|\\&\leq C||\mT||\cdot||x||
	\end{align*}
Since such $x$'s form a  dense subspace of  $\ell^1(G) \obp \mA$,  it follows
that $||h_N||\leq C||\mT||$ for all $N$. Now notice that
\[
\psi(h_N)=\sum_{t=1}^N\left|\psi(\mT_{\delta_{h_{n_t}}\otimes
  \rho(e_{n_t})})\right|\geq N\epsilon
\]
for all $N \in \N$. But this is
absurd because $\psi$ is a bounded linear functional and $\{h_N\}$ is a
bounded sequence. Thus, our assumption is wrong, and we must have
        $$
        \lim_{n}\psi(\mT_{\delta_{h_{n}}\otimes a_{n}})=0.
        $$ Thus, $(\varphi_\mU\square \psi)(\mT)=0$ (because the limit
        along an ultrafilter in a compact Hausdorff space is unique and
        every non-principal ultrafilter contains the cofinite
        filter). Since $\psi$ and $\mT$ were arbitrary, we conclude
        that $\varphi_\mU\square \psi=0$ for all $\psi\in X^{**}$.

        Next, we turn to the second Arens product. We have 
        $(\varphi_\mU\Diamond
        \psi)(\mT)=\psi(\mT_{\varphi_{_{\mU}}})$ for all $\psi \in X^{**}$, $\mT \in X^*$. Notice that for any $u\in
        \ell^1(G)\obp \mA$, we have
         \begin{align*}
         \mT_{\varphi_{_{\mU}}}(u)&=\varphi_{\mU}({_u}{\mT})\\&=\lim_{\mU}{_u\mT}(\delta_{h_n}\otimes \rho(e_n))\\&=\lim_{\mU}\mT_{\delta_{h_n}\otimes \rho(e_n)}(u).
        \end{align*}
        Thus,
        $\mT_{\varphi_{_\mU}}=w^*-\lim_{\mU}\mT_{\delta_{h_n}\otimes
          \rho(e_n)}$ and hence
        $\varphi_{_\mU}\Diamond\psi(\mT)=\psi\left(w^*-\lim_{\mU}\mT_{\delta_{h_n}\otimes
          \rho(e_n)}\right)$ for all $\psi \in X^{**}$, $\mT \in X^*$. Further,
        for an arbitrary $u=\sum_{i=1}^\infty \lambda_if_i\otimes
        b_i\in \ell^1(G,\mA)$, we have \begin{align*} \lim_{n}\left|
          \mT_{\delta_{h_{n}}\otimes \rho(e_n)}(u)\right|&= \lim_n
          \left|\mT\left(\sum_{i=1}^\infty\lambda_i\delta_{h_n}\star
          f_i\otimes \rho(e_n)\cdot b_i\right)\right|\\ &=0.
          &&(\because \lim_n\rho(e_n)\cdot b_i=0~\forall i)
      \end{align*} Thus,
        $w^*-\lim_{\mU}\mT_{\delta_{g_n}\otimes
          \rho(e_n)}=0$. Hence, $\varphi_\mU\Diamond\psi(\mT)=0$ for all
        $\psi\in X^{**}$ and all $\mT\in X^*$. 	Thus, we have shown that $\varphi_\mU\square\psi= 0 =\varphi_{\mU}\Diamond \psi$ for all $\psi\in X^{**}$. 
\end{proof}
As a consequence, we can explicitly see the failure of Strong Arens irregularity in $\ell^1(G,\mA)$ when $\mA$ contains a complemented algebraic copy of $\ell^1(\mathbb N)$ (for example Banach algebra $S_1(\mathcal H)$ of trace class operators on a Hilbert space $\mathcal H$).
\begin{corollary}
	$X:=\ell^1(G,\mA)$ is not strongly Arens irregular when $\mA$ contains a complemented algebraic copy of $\ell^1(\mathbb N)$.
\end{corollary}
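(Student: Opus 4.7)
The plan is to obtain the corollary as an immediate consequence of \Cref{mainth} combined with the non-membership statement from \Cref{phi-U}. Concretely, I would fix any non-principal ultrafilter $\mU$ on $\N$ (such ultrafilters exist by Zorn's lemma) and let $\varphi_\mU \in X^{**}$ denote the functional constructed in \Cref{phi-U}; that proposition tells me $\varphi_\mU \notin J(X)$.

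Next, I would invoke \Cref{mainth}, which under the standing hypothesis that $\mA$ contains a complemented algebraic copy of $(\ell^1(\N),\bullet)$ asserts $\varphi_\mU \square \psi = 0 = \varphi_\mU \diamond \psi$ for every $\psi \in X^{**}$. In particular, the two Arens products agree on every pair of the form $(\varphi_\mU, \psi)$, which by definition of $Z_t^{(l)}(X^{**})$ places $\varphi_\mU$ in the left topological center. Combining this with $\varphi_\mU \notin J(X)$ yields $J(X) \subsetneq Z_t^{(l)}(X^{**})$, so under the canonical identification $X \cong J(X)$ we have $Z_t^{(l)}(X^{**}) \neq X$. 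By definition, this means $X$ fails to be left strongly Arens irregular, hence is not strongly Arens irregular.

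There is no real obstacle: the substantive work (the ultrafilter construction and the annihilator computation for both Arens products) has already been done in \Cref{phi-U} and \Cref{mainth}, so the corollary is simply a restatement of "left annihilator lying outside $J(X)$" as "the left topological center strictly contains $X$". The only small point worth flagging in the write-up is that the example $\mA = S_1(\mH)$ mentioned in the statement indeed verifies the hypothesis, since the diagonal operators (with respect to any fixed orthonormal basis of $\mH$) form an isometric algebra copy of $(\ell^1(\N),\bullet)$ inside $S_1(\mH)$ that is complemented by the conditional expectation onto the diagonal; this makes the corollary applicable to a genuinely non-trivial class of generalized group algebras beyond the $\ell^1(\N)$ case already handled in the opening theorem of the section.
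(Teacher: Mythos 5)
Your proposal is correct and follows exactly the route the paper intends: the corollary is an immediate consequence of \Cref{mainth} (which puts $\varphi_\mU$ in $Z_t^{(l)}(X^{**})$ since both Arens products with any $\psi$ give $0$) together with the conclusion $\varphi_\mU \notin J(X)$ from \Cref{phi-U}, so $Z_t^{(l)}(X^{**}) \supsetneq J(X)$ and left strong Arens irregularity fails. The paper gives no separate written proof, and your added verification that $S_1(\mH)$ satisfies the hypothesis matches the example the paper mentions.
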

\begin{remark}
	It has been noticed so far that when $\mathcal A$ is $\mathbb C^n$ with pointwise multiplication or $L^1(H)$ with convolution, then $L^1(G,\mathcal A)$ is strongly Arens irregular for any infinite locally compact group $G$. On the other hand if $\mathcal A$ contains an algebraic copy of  $\ell^1(\mathbb N)$ w.r.t pointwise multiplication, then $\ell^1(G,\mathcal A)$ is not strongly Arens irregular for infinite discrete group $G$. All these examples points toward the following conjecture which is worth exploring.
\end{remark}
\begin{conjecture}
	If $G$ is any infinite locally compact group and $\mathcal A$ is a Banach algebra then $L^1(G,\mathcal A)$ is S.A.I. if and only if $\mathcal A$ is S.A.I..
\end{conjecture}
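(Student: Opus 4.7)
The plan is to prove the conjecture by separately addressing its two directions, since they admit very different treatments.

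For the forward direction (``$L^1(G,\mathcal A)$ S.A.I.\ implies $\mathcal A$ S.A.I.''), I would work by contrapositive. Assume without loss of generality that $\mathcal A$ is not left S.A.I., so that there exists $\Phi \in Z_t^{(l)}(\mathcal A^{**}) \setminus \mathcal A$. Fix $g_0 \in G$ and, using \Cref{obp-facts}(1), define $\tilde\Phi \in L^1(G,\mathcal A)^{**}$ by $\tilde\Phi(T) := \Phi(T(\delta_{g_0}))$ for $T \in B(L^1(G),\mathcal A^*)$. Intuitively, $\tilde\Phi$ encodes the bidual ``elementary tensor'' $\delta_{g_0} \otimes \Phi$. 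To see $\tilde\Phi \notin J(L^1(G,\mathcal A))$, evaluate on functionals $\mathcal T_{g,\omega}(f) := f(g)\omega$ from \Cref{phi-U}: for $x = \sum_j \delta_{u_j} \otimes b_j$ this forces $\omega(x(g_0)) = \Phi(\omega)$ for all $\omega \in \mathcal A^*$, contradicting $\Phi \notin \mathcal A$. The harder statement $\tilde\Phi \in Z_t^{(l)}(L^1(G,\mathcal A)^{**})$ should reduce, after unfolding $\Box$ and $\Diamond$, to the two ingredients: $\delta_{g_0} \in L^1(G) \subseteq Z_t^{(l)}(L^1(G)^{**})$ (automatic, since it lies in the algebra itself) and $\Phi \in Z_t^{(l)}(\mathcal A^{**})$ (by hypothesis). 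A mirror construction using right-slicing handles the right S.A.I.\ case.

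For the reverse direction (``$\mathcal A$ S.A.I.\ implies $L^1(G,\mathcal A)$ S.A.I.''), the plan is much more delicate. Given $\Psi \in Z_t^{(l)}(L^1(G,\mathcal A)^{**})$, one would like to extract, for each $\omega \in \mathcal A^*$, a ``slice'' $\Psi^{(\omega)} \in L^1(G)^{**}$ via $\Psi^{(\omega)}(f) := \Psi(f \otimes \omega)$, where $f \otimes \omega$ is viewed as a rank-one element of $B(L^1(G),\mathcal A^*)$; and symmetrically, for each $f \in L^1(G)^*$ a slice $\Psi_f \in \mathcal A^{**}$. One would then argue that the topological-center condition on $\Psi$ forces each $\Psi^{(\omega)}$ and each $\Psi_f$ to lie in the corresponding left topological center, so by Lau--Losert and the S.A.I.\ hypothesis on $\mathcal A$ all slices actually live in the factor algebras $L^1(G)$ and $\mathcal A$ respectively. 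The final step would be to reassemble the slices into a genuine element of $L^1(G,\mathcal A)$ representing $\Psi$.

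The main obstacle — and the reason the statement is a conjecture rather than a theorem — is the reverse direction. In the absence of extra structure on $\mathcal A$, the bidual $(L^1(G) \obp \mathcal A)^{**}$ admits no clean decomposition analogous to \eqref{decompose} (which required $\ell^1$ to be a separable dual space with a complemented $c_0$); an element of $L^1(G,\mathcal A)^{**}$ need not be determined by its scalar slices, and the topological center of a projective tensor product is not known in general to factor through the topological centers of the individual factors — a subtle and largely open structural question in the spirit of \"Ulger's work on Arens regularity of projective tensor products. I would expect the reverse direction to require substantially new input: either a robust Bochner-type slicing for the bidual that bypasses the tensor decomposition, or an incremental approach isolating broader classes of Banach algebras $\mathcal A$ (strictly larger than those for which our annihilator technique applies) where the reconstruction step can be pushed through.
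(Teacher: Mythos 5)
This statement is posed in the paper as a \emph{conjecture}: the author gives no proof of it, and the surrounding text (the annihilator construction of Theorem 3.4 and the remark preceding the conjecture) only supplies evidence consistent with it. So there is no proof in the paper to compare yours against, and your proposal does not close the gap: you yourself concede that the reverse direction (``$\mathcal A$ S.A.I.\ implies $L^1(G,\mathcal A)$ S.A.I.'') remains open, which already means the statement is not proved.

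The forward direction of your sketch also contains a genuine gap, at the step where you assert that $\tilde\Phi = \delta_{g_0}\otimes\Phi$ lying in $Z_t^{(l)}(L^1(G,\mathcal A)^{**})$ ``should reduce, after unfolding $\Box$ and $\Diamond$,'' to $\delta_{g_0}\in L^1(G)$ and $\Phi\in Z_t^{(l)}(\mathcal A^{**})$. To verify $\tilde\Phi\Box\Psi=\tilde\Phi\Diamond\Psi$ you must test against \emph{every} $\Psi\in (L^1(G)\obp\mathcal A)^{**}=B(L^1(G),\mathcal A^*)^*$, and such $\Psi$ are not determined by, nor approximable through, slices compatible with both Arens products at once: if you write $\Psi=w^*\text{-}\lim_\alpha J(x_\alpha)$, then $\tilde\Phi\Diamond\Psi=\lim_\alpha \tilde\Phi\Diamond J(x_\alpha)$ by $w^*$-continuity of $\Diamond$ in the second variable, but passing to the limit in $\tilde\Phi\Box J(x_\alpha)$ requires precisely the $w^*$-continuity of $\Psi\mapsto\tilde\Phi\Box\Psi$, i.e.\ the membership $\tilde\Phi\in Z_t^{(l)}$ you are trying to prove --- the reduction is circular. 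This is not a technicality: the multiplicative structure of $(\mA\obp\mB)^{**}$ does not factor through $\mA^{**}$ and $\mB^{**}$ (this is exactly the difficulty in \"Ulger's work that you cite in your own discussion of the reverse direction), and the same obstruction already bites here. The non-membership argument $\tilde\Phi\notin J(L^1(G,\mathcal A))$ via the functionals $\mT_{g,\omega}$ is fine, but without the topological-center membership it proves nothing about S.A.I. In short: both directions of the conjecture remain unproved, and your proposal should be read as a (reasonable) research plan rather than a proof.
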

Recently, it has been proved in \cite{Lav} that if $G$ is a compact abelian group and $\mA$ is a refelexive Banach algebra then $L^1(G,\mA)$ is strongly Arens irregular.\\

{\bf Acknowledgments:} I would like to thank the anonymous referee for  carefully reading the manuscript and providing valuable suggestions and corrections.

\end{document}